 \theoremstyle{plain}
 \newtheorem{Theorem}{Theorem}
 \newtheorem{Corollary}{Corollary}
 \newtheorem{Lemma}{Lemma}
 \newtheorem{Proposition}{Proposition}
 \theoremstyle{definition}
 \newtheorem{Definition}{Definition}
\newcommand{\nr}{\nonumber}
\newcommand{\gr}{\textsl{grad\;}}
\newcommand{\s}{\mbox{\small{$\mathcal{S}$}}}
\newcommand{\sa}{\mbox{\scriptsize{$\mathcal{S}$}}}
\title{A class of  semibasic vector 1\,--\,forms\\ on   Finsler manifolds}
\author{A. Tayebi and M. Barzegari}
\begin{document}

\maketitle

\begin{abstract}
In this paper, we define  conservative semibasic vector $1-$forms on the tangent bundle of a Finsler manifold. Using these vector $1-$forms, we characterize conservative $L-$Ehresmann connections with respect to the energy function. Then we find  a correspondence between torsion-free semibasic vector $1-$forms and the subset of  vertical vector fields. Taking into account this correspondence,  we construct a class of semisprays that generates the Ehresmann connections mentioned above.\\\\
{\bf Keywords:} Finsler manifold, Ehresmann connection, Semibasic vector 1-form.\footnote{ 2010 Mathematics subject Classification:  53C07, 53C60.}

\end{abstract}

\section{Introduction}
The difference of  two Ehresmann connections over a manifold $M$ is a semibasic vector $1-$forms on $TM$.
Motivated by this fact, it is natural to define a conservative semibasic vector $1-$form on a Finsler manifold with respect to an energy function induced by a Finsler metric. A semibasic vector $1-$form is called conservative on a Finsler manifold $(M,E)$ if it can be expressed as a difference of two conservative Ehresmann connections on $M$. The conservativity property of a semibasic vector $1-$form $L$ on $TM$ implies the conservativity of an $L-$Ehresmann connection, which is defined by
\[\mathbf{h}_{L}:=\mathbf{h}_{\,0}+L+[J,(d_LE)^{\#}],\]
where $\mathbf{h}_{\,0}$ is the Berwald connection. It is well known that this class of Ehresmann connections contains  the Wagner connection of the Finsler manifold $(M,E)$ and among them the conservative ones are conformally closed. For more details see \cite{RB} and \cite{MBN}.

In Section 3, using conservative semibasic vector $1-$forms, we characterize conservative $L-$Ehresmann connections with respect to the energy function $E$.

The weak torsion of an $L-$Ehresmann connection is in the following form $$\mathbf{t}_L=[J,L].$$ Thus, it is natural to define torsion-freeness of a semibasic vector $1-$form $L$ on the tangent manifold $TM$ by requiring the condition $[J,L]=0.$ In Section 4, we establish  a correspondence between the subset of  vertical vector fields $\mathfrak{X}^v(TM)$
and the set of torsion\,--\,free semibasic vector $1-$forms on $TM$.

Taking into account this correspondence, in Subsection 4.2, we introduce a set of semisprays as follows
\[
\overset{\,\,V}{\s}=\s_0+2\,V+2\,(d_{\,[J,V]}E)^\#,
\]
where $V$ is a vertical vector field on $TM$. If $V$ is a two-homogeneous vertical vector field, then the semispray $\overset{\,\,V}{\s}$ is a spray and we can find projectively related relation between two sprays in this form.

In \cite{VIN}, Vincze presents the theory of conservative semisparays on a Finsler
manifold $(M,E)$ with respect to the energy function $E$. Then he defines a conservative vertical vector field on a Finsler manifold. In this paper,  using conservativity and torsion-freeness of the $L-$Ehresmann connections, we find a class of conservative vertical vector fields on a Finsler manifold.

\section{Preliminaries}

We work on an $n-$dimensional connected smooth manifold $M$ whose topology is Hausdorff and has a countable base. $C^\infty(M)$
denotes the ring of smooth real-valued functions on $M$, $\mathfrak{X}(M)$ and  $\Omega^k(M)$ stand for the $C^\infty(M)-$ module of (smooth)
vector fields  and differential $k-$forms on $M$.

$TM$ is the tangent manifold of $M$, and $\overset{\circ}{T}M$ is the open submanifold of the non-zero tangent vectors to $M$.
The vertical and the complete lift of a smooth function $f$ on $M$ into $TM$ are denoted by $f^v$ and $f^c$, respectively.

The $C^\infty (TM)-$module of vertical vector fields on $TM$ will
be denoted by $\mathfrak{X}^v(TM)$. $X^v$  stands for the vertical lift of a vector field $X$ on $M$ and $C\in\mathfrak{X}^v(TM)$ is the \emph{Liouville  vector field}.

\bigskip

By a \emph{vector $k-$form} on $TM$ we mean a skew symmetric
$C^\infty(TM)-$ multilinear map $K:(\mathfrak{X}(TM))^k\rightarrow
\mathfrak{X}(TM)$ if $k\in \{1,\ldots,2n\}$, and a vector field on
$TM$, if $k=0$. In particular, a vector $1-$form on $TM$ is just a
tensor field of type $(1,1)$. The $C^\infty(TM)-$module of vector
$k-$forms on $TM$ will be denoted by $\Psi^k(TM)$.

\bigskip
By the \emph{Fr\"{o}licher-Nijenhuis theory} of vector forms to any
vector $k-$form $K\in\Psi^k(TM)$ two graded derivations of
$\Omega(TM)$ are associated, denoted by $i_K$ and $d_K$, which the former is of
degree $k-1$, and the later is of degree $k$, and the following rules are
prescribed:
\begin{align}
&i_K\upharpoonright C^\infty(TM)=0;\quad i_K \circ\alpha=\alpha\circ
K,\quad \textrm{if} \,\,\alpha \in \Omega^1(TM);\label{d1}
\\
&d_K:=[i_K,d]=i_K\circ d- (-1)^{k-1}d \circ i_K.\label{d2}
\end{align}
Then, in particular,
\begin{align}
d_K\varphi=d\varphi\circ K;\qquad \varphi\in C^\infty (TM)\ ,\ K\in\Psi^k(TM).\label{d3}
\end{align}
To any vector forms $K\in\Psi^k(TM)$, $L\in\Psi^\ell(TM)$ there is a unique vector
$(k+l)-$form $[K,L]\in\Psi^{k+l}(TM)$, the \emph{Fr\"{o}licher-Nijenhuis bracket} of $K$ and $\L$ such that
\begin{align}\label{e}
d_{[K,L]}=[d_K,d_L].
\end{align}
In particular, if $K\in \Psi^1(TM)$, $Y\in\Psi^\circ(TM)$ then $[K,Y]\in\Psi^1(TM)$,
and for any vector field $X\in\mathfrak{X}(TM)$, we have,
\begin{gather}
[K,Y]X=[KX,Y]-K[X,Y],\\
i_{\,[K,Y]}=i_Y\circ d_K + d_K\circ i_Y -\mathcal{L}_{KY},\label{ee2}\\
i_Y\circ i_K=i_K\circ i_Y+i_{KY}.\label{new1}
\end{gather}
There is a unique vector $1-$form $J$ on $TM$ such that
\begin{gather}\label{ee4}
Im\,J=Ker\,J=\mathfrak{X}^v(TM),
\quad [J,J]=0,\quad
[J,C]=J.
\end{gather}
$J$ is called the \emph{vertical endomorphism}. A differential form $\alpha \in \Omega ^k(TM)$ is \emph{semibasic}, if
$i_{J\xi}\alpha=0;$ a vector form $K\in\Psi^k(TM)$ is \emph{semibasic}, if $i_{J\xi}K=0$ and
$J\circ K=0$ $(k \geq 1,\ \xi\in \mathfrak{X}(TM))$.

 \bigskip
 A vector field $\s$  on $TM$ of class $C^1$, smooth on  $\overset{\circ}{T}M$ is said to be a \emph{semispray} over $M$ if $J\s=C$.  A semispray $\s$ is called a \emph{spray} if it is homogeneous of degree 2, i.e., $[C,\s]=\s$.

A vector 1--form $\mathbf{h}\in \Psi^{1}(TM)$, smooth --in general--
only over $\overset{\circ}{T}M$ is said to be a \emph{Ehresmann connection}  over $M$ if it is a
projector (i.e., $\mathbf{h}^2=\mathbf{h}$) and
$\ker\mathbf{h}=\mathfrak{X}^v(TM)$, or, equivalently, if $J\circ\mathbf{h}= J$ and $\mathbf{h}\circ J=0$.
An Ehresmann connection $\mathbf{h}$ is called \textit{homogeneous} if its tension vanishes, i.e., $\mathbf{H}=[C,\mathbf{h}]=0$. The \emph{weak} \emph{torsion} of
$\mathbf{h}$ is the vector 2-form $ \mathbf{t}:=[J,\mathbf{h}]$.

\medskip

A fundamental result due to M. Crampin and J. Grifone states that
\emph{any semispray $\s$ generates a Ehresmann connection of zero
weak torsion} by the formula
\begin{equation}\label{eh}
\mathbf{h}=\frac{1}{2}\,(1_{\mathfrak{X}(TM)}+[J,\s]).
\end{equation}

\bigskip

Let a function $E:TM\rightarrow \mathbb{R}$ be given. Assume:
\begin{itemize}
    \item[(i)]
    \ $E(v)>0$ for all $v\in \overset{\circ}{T}M,\,\,\, E(0)=0$;
    \item[(ii)]
    \ $E$ is of class $C^{1}$ on $TM$, smooth on
    $\overset{\circ}{T}M$;
    \item[(iii)]
    \ $E$ is (positive--) homogeneous of degree 2, i.e., $CE=2E$;
    \item[(iv)]
    \ The \emph{fundamental 2-form} $\omega:=d\,d_JE$ is nondegenerate.
\end{itemize}
Then $(M,E)$ is said to be a \emph{Finsler manifold} with the energy function
$E$. Notice that  $\omega$ is semibasic and we have the relations
\begin{equation}
i_J\omega=0,\quad
i_C\,\omega=d_JE,\quad\mathcal{L}_C\,\omega=\omega.
\end{equation}
Due to the nondegeneracy of $\omega$, for
any $1-$form $\beta \in \Omega^1(TM)$ there is unique vector field
$\beta^{\#}$ on $TM$ (smooth, in general, only on
$\overset{\circ}{T}M$) such that
\begin{equation}\label{w}
i_{\beta^{\#}}\,\omega=\beta.
\end{equation}
This map $\#:\beta\rightarrow \beta^{\#}$ is called the (Finslerian)
\emph{sharp operator}. In particular, the \emph{gradient} of a
function $f\in C^\infty(TM)$ is the vector field
$\gr\,f:=(df)^{\#}$.

\bigskip

Following Grifone \cite{GRI}, by the \emph{potential} of a semibasic
$k-$form $K$ on $TM$ we mean the $(k-1)-$form $K^\circ:=i_{\s}K$,
where $\s$ is any semispray over $M\,\,$ $(k\geqq 1)$. Clearly,
$K^\circ$ is independent of the choice of $\s$.


To conclude this section, we recall the \emph{fundamental lemma of
Finsler geometry} due to J. Grifone \cite{GRI}.
Let $(M,E)$ be a Finsler manifold. If
\[
\s_0:=-(dE)^{\#} \quad\textrm{over} \quad\overset{\circ}{T}M, \quad
\s_0(0):=0
\]
then $\s_{\,0}$ is a spray over $M$, called the \emph{canonical spray} of
$(M,E)$. $\s_0$ generates a homogeneous Ehresmann connection
$\mathbf{h}_{\,0}$ according to (\ref{eh}), called the \emph{Berwald connection} of
$(M,E)$. $\mathbf{h}_{\,0}$ is conservative in the sense that
\[d_{\mathbf{h}_{\,0}}E=0.\]

\section{Conservative Semibasic Vector 1-Form}
The difference of two Ehresmann connections is a semibasic vector $1-$form,
thus it is natural to define the conservativity of semibasic vector $1-$form as  follows.
\begin{Definition}
\emph{Suppose that $(M,E)$ is a Finsler manifold. A semibasic vector
$1-$form $L$  on $TM$ is said to be \emph{conservative} with respect to energy function E,  if it is a difference of two conservative Ehresmann connections.}
\end{Definition}
It is easy to see that a semibasic vector $1-$form $L$  on $TM$ is conservative if and only if $d_LE=0$. Therefore, the set of all  conservative semibasic vector $1-$forms on $TM$ is a $C^\infty(TM)-$module.

Authors have shown in \cite{SZA} that any homogeneous, conservative Ehresmann connection $\mathbf{h}$ over $M$ can be expressed as follows:
\[\mathbf{h}=\mathbf{h}_{\,0}+\frac12 \mathbf{t}^\circ+\frac12[J,(d_{\,\mathbf{t}^\circ}E)^\#].\]
Hence every homogeneous, conservative Ehresmann connection $\mathbf{h}$ over $M$ generates a conservative semibasic vector $1-$form
$L=\mathbf{t}^\circ+[J,(d_{\,\mathbf{t}^\circ}E)^\#]$.

Another example of conservative semibasic vector $1-$form, is difference of the Berwald and the Wagner Ehresmann connections. Let $f$ be a smooth function on $M$. It is known \cite{VIN2} the Wagner connection  of a Finsler manifold $(M,E)$ have the form,
\[
\overline{\mathbf{h}}:=\mathbf{h}_{\,0}+f^c J - E[J,\gr f^v] - d_JE \otimes\gr f^v,
\]
Then the following semibasic
vector $1-$form is conservative on Finsler manifold $(M,E)$:
\[
L:=f^c J - E[J,\gr f^v] - d_JE \otimes\gr f^v.
\]

\subsection{Conformal Change of Energy Function}\label{uu}
Assume that $(M,E)$ is a Finsler manifold. Let $f$ be a smooth function on $M$ and define a positive function on $TM$ by
\[
\varphi:=exp\circ f^v.
\]
If $\overset{\sim}{E}=\varphi E$, then $(M,\overset{\sim}{E})$ is also a Finsler manifold (see \cite{SV2}).
We say that $(M,\overset{\sim}{E})$ has been obtained by a conformal change of $E$ given by the scale function $\varphi$.
\smallskip

It is known  that a smooth function $\varphi$ on $TM$ is a vertical
lift if and only if $d_J\,\varphi=0$, where $J$ is the canonical vertical endomorphism
on $TM$ (see \cite{SV2}).

Let $K$ be a semibasic vector $1-$form on $TM$. If a smooth function $\varphi$ on $TM$ is a vertical
lift, then
\[\forall\ X\in\mathfrak{X}(TM):\quad (d_K\,\varphi)X\overset{(\ref{d3})}{=}(d\varphi\circ K)X=(KX)\varphi=0, \]
since $KX$ is a vertical vector field. Thus $d_K\,\varphi=0$.

Therefore for the scale function $\varphi:=exp\circ f^v$,  we have $d_K\,\varphi=0$.
\begin{Proposition}
The conservativity property  of a semibasic vector $1-$form on $TM$ (with respect to energy function of Finsler manifold)
is invariant under any conformal change of energy function.
\end{Proposition}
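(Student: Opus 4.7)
The plan is to reduce everything to the characterization stated right after the definition of conservativity, namely that a semibasic vector $1$-form $L$ is conservative with respect to the energy function $E$ if and only if $d_LE=0$. Given this, the proposition amounts to showing that $d_LE=0 \Longleftrightarrow d_L\widetilde{E}=0$, where $\widetilde{E}=\varphi E$ with $\varphi:=\exp\circ f^v$.

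First I would use the fact that $d_L$, being a graded derivation of $\Omega(TM)$ of degree $1$, satisfies the Leibniz rule on products of smooth functions. Applied to $\widetilde{E}=\varphi E$, this yields
\[
d_L\widetilde{E} \;=\; d_L(\varphi E) \;=\; (d_L\varphi)\,E + \varphi\,(d_LE).
\]
The key input is then the observation established in the paragraph immediately preceding the proposition: since $\varphi$ is a vertical lift (being of the form $\exp\circ f^v$, so $d_J\varphi=0$), and since $L$ is semibasic (hence $LX\in\mathfrak{X}^v(TM)$ for every $X$), we have $d_L\varphi=0$. Substituting this reduces the identity to
\[
d_L\widetilde{E} \;=\; \varphi\,d_LE.
\]

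Finally I would invoke the strict positivity of $\varphi=\exp\circ f^v$ on $TM$ to conclude that $d_L\widetilde{E}=0$ if and only if $d_LE=0$. Since conservativity of $L$ with respect to either energy function is characterized by the vanishing of the corresponding $d_L$-image, this gives the equivalence. There is no real obstacle here; the whole argument is driven by the Leibniz rule together with the vanishing of $d_L$ on vertical lifts, which has already been singled out in the preceding discussion.
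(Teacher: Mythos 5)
Your proposal is correct and follows essentially the same route as the paper: the Leibniz rule gives $d_L\widetilde{E}=(d_L\varphi)E+\varphi\,d_LE$, the term $d_L\varphi$ vanishes because $\varphi$ is a vertical lift and $L$ is semibasic, and positivity of $\varphi$ yields the equivalence. Nothing is missing.
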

\begin{proof}
Let us consider the conformal change
$\overset{\sim}{E}=\varphi E\ (\varphi:=exp\circ f^v)$ of a Finsler manifold $(M,E)$.
Then
\[
d_{L}\overset{\sim}{E}=d_{L}\,\varphi E
       =(d_{L}\,\varphi) E +\varphi\, d_{L}E
       =\varphi\, d_{L}E.
\]
This relation implies that conservativity of a semibasic vector $1-$form on $TM$ is invariant under any conformal change of
energy function.
\end{proof}
\subsection{L-Ehresmann Connection on a Finsler Manifold}
Let $(M,E)$ be a Finsler manifold and $L$ be a semibasic vector
$1-$form on $TM$. The Ehresmann connection
\[
\mathbf{h}_{L}:=\mathbf{h}_{\,0}+L+[J,(d_LE)^{\#}],
\]
is called \emph{$L-$Ehresmann connection} on Finsler manifold $(M,E)$.
For more details of $L-$Ehresmann connection, we refer to \cite{MBN}.

\smallskip
Next proposition shows that the conservativity of semibasic vector $1-$form $L$ on $TM$ implies the conservativity of $L-$Ehresmann connection.
\begin{Proposition}
Let $(M,E)$ be a Finsler manifold. Suppose that $L$ is a
conservative semibasic vector $1-$form on $TM$ with respect to
energy function $E$. Then the $L-$Ehresmann connection is
conservative.
\end{Proposition}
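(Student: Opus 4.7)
The plan is to reduce $\mathbf{h}_L$ to $\mathbf{h}_0 + L$ by exploiting conservativity, and then to invoke additivity of the map $K \mapsto d_K E$ on vector $1$-forms together with the conservativity of the Berwald connection.

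First I would observe that since $L$ is conservative, $d_L E = 0$ as a $1$-form on $TM$. The sharp operator defined in (\ref{w}) is $C^\infty(TM)$-linear, hence $(d_L E)^{\#} = 0$. The Frölicher-Nijenhuis bracket formula for a vector $1$-form with a vector field then yields $[J,(d_L E)^{\#}] = 0$ (e.g.\ directly from the identity $[K,Y]X = [KX,Y] - K[X,Y]$ with $Y=0$). Consequently the defining formula for $\mathbf{h}_L$ collapses to
\[
\mathbf{h}_L = \mathbf{h}_{\,0} + L.
\]

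Next I would verify that $d_{\mathbf{h}_0 + L} E = d_{\mathbf{h}_0} E + d_L E$. This follows at once from (\ref{d3}): for any vector $1$-form $K$ and any $\varphi \in C^\infty(TM)$, $d_K \varphi = d\varphi \circ K$, and composition with $d\varphi$ is additive in $K$. Thus the map $K \mapsto d_K E$ is additive on $\Psi^1(TM)$, and
\[
d_{\mathbf{h}_L} E = d_{\mathbf{h}_0} E + d_L E.
\]

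Finally, the fundamental lemma of Grifone recalled at the end of Section~2 gives $d_{\mathbf{h}_0} E = 0$, while the hypothesis on $L$ gives $d_L E = 0$. Combining these yields $d_{\mathbf{h}_L} E = 0$, which is precisely the conservativity of the $L$-Ehresmann connection. There is no real obstacle here beyond making sure one is entitled to distribute $d_{(\cdot)} E$ over the sum of vector $1$-forms and that the bracket $[J,\cdot]$ vanishes on the zero vector field; both are immediate from the preliminaries.
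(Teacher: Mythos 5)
Your proposal is correct and follows essentially the same route as the paper: conservativity of $L$ gives $d_LE=0$, hence $(d_LE)^{\#}=0$ and $\mathbf{h}_L=\mathbf{h}_{\,0}+L$, after which additivity of $K\mapsto d_KE$ and the conservativity of the Berwald connection finish the argument. You merely spell out the steps the paper leaves implicit.
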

\begin{proof}
By conservativty of $L$ we obtain $\mathbf{h}_{L}=\mathbf{h}_{\,0}+L$. This
proves  what we want.
\end{proof}

\bigskip

We obtain a condition for conservativity of $L-$Ehresmann connection in the following theorem. First we need to prove next lemma.
\begin{Lemma}\label{lem}
If $\beta$ is a semibasic $1-$form on $TM$, then $\beta^{\#}E=\beta^\circ.$
\end{Lemma}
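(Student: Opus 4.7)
\smallskip

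My plan is to unwind both sides by using the canonical spray $\s_0 = -(dE)^{\#}$ as the distinguished semispray in the computation of the potential. Since $\beta$ is semibasic, the potential $\beta^\circ = i_{\s}\beta$ is independent of the choice of semispray (the difference of two semisprays is vertical, and $\beta$ kills vertical vectors); in particular, I may evaluate $\beta^\circ$ at $\s_0$.

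\smallskip

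The computation then proceeds in three short steps. First, from the fundamental lemma of Finsler geometry, $\s_0 = -(dE)^{\#}$, so by the defining equation (\ref{w}) of the sharp operator, $i_{\s_0}\omega = -dE$. Second, evaluating this $1$-form identity at the vector field $\beta^{\#}$ and using the antisymmetry of the fundamental $2$-form $\omega$,
\begin{equation*}
\beta^{\#}E \;=\; dE(\beta^{\#}) \;=\; -\omega(\s_0,\beta^{\#}) \;=\; \omega(\beta^{\#},\s_0) \;=\; (i_{\beta^{\#}}\omega)(\s_0).
\end{equation*}
Third, by the very definition of $\beta^{\#}$ we have $i_{\beta^{\#}}\omega = \beta$, so the right-hand side equals $\beta(\s_0) = i_{\s_0}\beta = \beta^\circ$.

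\smallskip

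There is essentially no obstacle here beyond keeping track of the sign introduced by $\s_0 = -(dE)^{\#}$ and the antisymmetry of $\omega$; the semibasic hypothesis on $\beta$ is used only implicitly, to guarantee that $\beta^\circ$ is a well-defined object independent of the semispray chosen to define it, so that the evaluation at $\s_0$ in the second step indeed produces $\beta^\circ$.
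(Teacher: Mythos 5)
Your proof is correct and follows essentially the same route as the paper: both compute $\beta^{\#}E = dE(\beta^{\#})$, use $i_{\s_0}\omega = -dE$ from the definition of the canonical spray, apply the antisymmetry of $\omega$ and the defining relation $i_{\beta^{\#}}\omega=\beta$, and evaluate at $\s_0$ to obtain $\beta^\circ$. Your explicit remark on why the semibasic hypothesis makes $\beta^\circ$ independent of the chosen semispray is a nice touch, but otherwise the two arguments coincide step for step.
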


\begin{proof}
Using definition of canonical spray of Finsler manifold, we get
\begin{align*}
\beta^{\#}E &= dE ( \beta^{\#})= -\big( i_{\,\sa_0}\omega \big)( \beta^{\#} )\\
            &= \omega ( \beta^{\#},\s_0 )= \big( i_{\beta^{\#}}\omega \big)( \s_0 )\\
            &=  \beta\,\s_0=i_{\,\sa_0} \beta\\
            &= \beta^\circ.
\end{align*}
This completes the proof.
\end{proof}

\bigskip

\begin{Theorem}\label{t2}
Let $(M,E)$ be a Finsler manifold and $L$ be a semibasic vector
$1-$form on $TM$. Then $L-$Ehresmann connection is
conservative if and only if $L^\circ E$ is a vertical lift.
\end{Theorem}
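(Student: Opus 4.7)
The plan is to reduce $d_{\mathbf{h}_L}E=0$ to a statement about $L^\circ E$ by a direct computation in the Fr\"olicher--Nijenhuis calculus, using Lemma~\ref{lem} at the end. Since the Berwald connection is conservative, writing $\beta:=d_LE$, the $L$-Ehresmann connection is conservative if and only if $\beta + d_{[J,\beta^{\#}]}E = 0$. A first useful observation is that $\beta$ is itself a semibasic $1$-form: since $J\circ L=0$, we have $\beta(J\xi)=dE(L(J\xi))=0$ for every $\xi\in\mathfrak{X}(TM)$. This will be crucial for a cancellation below.

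The main step is to expand $d_{[J,\beta^{\#}]}E$. By (\ref{e}) and the degrees involved ($J$ is of degree $1$, $\beta^{\#}$ of degree $0$), $d_{[J,\beta^{\#}]}=d_Jd_{\beta^{\#}}-d_{\beta^{\#}}d_J$. Because $\beta^{\#}$ is a vector field, $d_{\beta^{\#}}$ is just the Lie derivative $\mathcal{L}_{\beta^{\#}}$ on forms, so evaluating at $E$ gives
\[
d_{[J,\beta^{\#}]}E \;=\; d_J(\beta^{\#}E) \,-\, \mathcal{L}_{\beta^{\#}}(d_JE).
\]
Cartan's magic formula together with the Finslerian identity $d(d_JE)=\omega$ and the defining property $i_{\beta^{\#}}\omega=\beta$ yields $\mathcal{L}_{\beta^{\#}}(d_JE)=\beta+d\bigl((d_JE)(\beta^{\#})\bigr)$. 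The crucial simplification is now $(d_JE)(\beta^{\#})=\omega(C,\beta^{\#})=-\beta(C)$ (using $i_C\omega=d_JE$), and $\beta(C)=\beta(J\s)=0$ since $\beta$ is semibasic. Substituting everything back, the two copies of $\beta$ cancel and the conservativity condition collapses to $d_J(\beta^{\#}E)=0$.

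To conclude, Lemma~\ref{lem} gives $\beta^{\#}E=\beta^\circ$, and by definition of the potential $\beta^\circ=\beta(\s)=(d_LE)(\s)=dE(L\s)=(L\s)E=L^\circ E$. Hence conservativity of $\mathbf{h}_L$ is equivalent to $d_J(L^\circ E)=0$, which by the characterization recalled in Subsection~\ref{uu} says precisely that $L^\circ E$ is a vertical lift. The main obstacle I anticipate is simply spotting that the two $\beta$ terms cancel after applying Cartan's formula; once the semibasic nature of $\beta$ is noted, the rest is routine bookkeeping with the identities (\ref{d2})--(\ref{ee2}).
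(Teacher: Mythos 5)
Your argument is correct and is essentially the paper's proof: the same splitting $d_{\mathbf{h}_L}E=d_{\mathbf{h}_0}E+d_LE+d_{[J,(d_LE)^{\#}]}E$, the same reduction to $d_{\mathbf{h}_L}E=d_J(L^\circ E)$ via Lemma \ref{lem} and the identity $(d_LE)^\circ=L^\circ E$, only with the bracket term expanded through $d_{[J,\beta^{\#}]}=[d_J,\mathcal{L}_{\beta^{\#}}]$ and Cartan's formula instead of identity (\ref{ee2}). One small correction: the vanishing of $\beta(J\xi)$ (hence of $\beta(C)$) follows from $i_{J\xi}L=0$, i.e.\ $L\circ J=0$, not from $J\circ L=0$ as you wrote; both are part of $L$ being semibasic, so nothing in the argument is affected.
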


\begin{proof}
For simplicity, let us put
\[
U:=(d_{L}E)^\#.
\]
By definition of
$L-$Ehresmann connection  we get
\[
d_{\,\mathbf{h}_{L}}E = d_{\,\mathbf{h}_{\,0}}E + d_{L}E + d_{\,[J,U]}E
\]
The first term of right hand side vanishes by conservativity of the Berwald connection, and for the third one, we have
\begin{align}
d_{\,[J,U]}\,E  &\overset{(\ref{d2})}{=}
i_{\,[J,U]}\,d E \overset{(\ref{ee2})}{=}
i_U\, d_J\, dE+\, d_J\,i_U\, dE
\nr\\
& \overset{(\ref{d2})}{=} -\,i_U\, d \, d_JE +\,d_J\,i_U\, dE
\nr\\
&\,= -\,i_U\,\omega + \,d_J\,i_U\,dE
\nr\\
&\overset{(\ref{w})}{=} - \,d_{L}E + d_J\,i_U\,dE. \nr
\end{align}
Thus, we have
\begin{align*}
d_{\,\mathbf{h}_{L}}E &= d_J\circ i_U\circ dE
\overset{(\ref{d1})}{=} d_J\circ dE\,(U)
\\
&= d_J(U E) \overset{{\textmd{Lem}.}\ref{lem}}{=}
d_J((d_{L}E)^\circ)
\\
&\overset{\dag}{=} d_J (L^\circ E) \nr
\end{align*}
In $\dag$ we have used following relations for an arbitrary semispray $\s$,
\[
(d_{L}E)^\circ = (d_{L}E)(\s) = (i_{L}dE)(\s)
                 = dE(L\s) = dE(L^\circ) = L^\circ E.
\]
The relation
\[d_{\,\mathbf{h}_{L}}E=d_J (L^\circ E)\]
implies that $L-$Ehresmann connection is
conservative if and only if $L^\circ E$ is a vertical lift.
\end{proof}

\bigskip

\begin{Corollary}
Let $(M,E)$ be a Finsler manifold. Suppose that $K$ is a semibasic vector $1-$form on $TM$ and $f$ is a smooth function
on $M$. Define $L:=\frac1{K^\circ E}f^vK$. Then $L-$Ehresmann connection is conservative, provided that the function $K^\circ E$ nowhere vanishes.
\end{Corollary}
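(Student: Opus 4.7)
The plan is to reduce the corollary directly to Theorem~\ref{t2}, which says that the $L$-Ehresmann connection is conservative if and only if $L^\circ E$ is a vertical lift. So the whole task becomes computing the function $L^\circ E$ for the specific $L$ given in the statement and recognizing it as a vertical lift.

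First I would fix an arbitrary semispray $\s$ over $M$ (the potential $K^\circ$ is independent of this choice anyway). Since $L = \tfrac{1}{K^\circ E}\,f^v K$ is a scalar multiple of the vector $1$-form $K$, taking $i_{\s}$ gives
\[
L^\circ \;=\; L\s \;=\; \frac{f^v}{K^\circ E}\,K\s \;=\; \frac{f^v}{K^\circ E}\,K^\circ.
\]
Here the hypothesis that $K^\circ E$ nowhere vanishes ensures $L$ is well-defined and smooth on $\overset{\circ}{T}M$.

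Next I apply the vector field $L^\circ$ to $E$. Since $\tfrac{f^v}{K^\circ E}$ is a function, it pulls out of the derivation:
\[
L^\circ E \;=\; \frac{f^v}{K^\circ E}\,(K^\circ E) \;=\; f^v.
\]
Because $f \in C^\infty(M)$, the function $f^v$ is by definition a vertical lift. Invoking Theorem~\ref{t2} then finishes the proof.

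There is no real obstacle here; the normalization factor $1/K^\circ E$ was put into the definition of $L$ precisely to cancel the factor $K^\circ E$ arising when one evaluates $L^\circ E$, forcing the outcome to be the vertical lift $f^v$. The only subtlety to mention explicitly is that the potential $K^\circ = i_{\s}K$ is independent of the choice of semispray, so the computation of $L^\circ$ is unambiguous.
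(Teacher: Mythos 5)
Your proposal is correct and is exactly the intended argument: the paper states this corollary without a written proof, as an immediate application of Theorem~\ref{t2}, and your computation $L^\circ E=\frac{f^v}{K^\circ E}\,(K^\circ E)=f^v$, a vertical lift, is precisely the calculation the paper leaves to the reader.
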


\bigskip

\begin{Corollary}
A Wagner connection is conservative.
\end{Corollary}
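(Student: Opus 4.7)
The plan is to identify the Wagner connection as an $L$--Ehresmann connection for an explicit semibasic vector $1$--form and then apply the Proposition of Subsection 3.2.

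First I would recall the formula for the Wagner connection recorded earlier in Section 3,
\[
\overline{\mathbf{h}} = \mathbf{h}_{\,0} + f^c J - E[J,\gr f^v] - d_JE \otimes \gr f^v,
\]
and name the ``correction term''
\[
L := f^c J - E[J,\gr f^v] - d_JE \otimes \gr f^v,
\]
which is semibasic since each of its three summands is. The text following Definition 1 already asserts that this $L$ is conservative with respect to $E$ (it is exhibited there as the difference of two conservative Ehresmann connections, namely the Wagner connection and the Berwald connection, which makes $d_L E = 0$ by the definition of conservativity); so I would simply invoke that fact.

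Next I would observe that, because $L$ is conservative, $d_L E = 0$, hence $(d_L E)^{\#} = 0$ and consequently $[J,(d_L E)^{\#}] = 0$. Thus the $L$--Ehresmann connection reduces to
\[
\mathbf{h}_L = \mathbf{h}_{\,0} + L + [J,(d_L E)^{\#}] = \mathbf{h}_{\,0} + L = \overline{\mathbf{h}},
\]
so the Wagner connection is precisely the $L$--Ehresmann connection associated with this $L$. Applying the Proposition of Subsection 3.2 (conservativity of $L$ yields conservativity of $\mathbf{h}_L$) immediately gives $d_{\overline{\mathbf{h}}} E = 0$.

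There is really no serious obstacle here: everything is essentially a re-packaging of earlier assertions. The only point that would merit a direct verification (if one were unwilling to cite the conservativity of $L$ from the passage in Section 3) is the computation $d_L E = 0$, which would use $CE = 2E$, the identity $i_{\gr f^v}\omega = df^v$ and the fact that $f^v$ is a vertical lift so $d_J f^v = 0$; but as the paper has already recorded this, the corollary reduces to a one-line consequence of the preceding Proposition.
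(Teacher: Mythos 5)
Your main line of reasoning is circular within the paper's logic. The passage in Section 3 that you cite asserts that $L:=f^cJ-E[J,\gr f^v]-d_JE\otimes\gr f^v$ is conservative precisely because it is ``the difference of the Berwald and the Wagner Ehresmann connections'': by Definition 1, conservativity of a semibasic vector $1$--form \emph{means} being a difference of two conservative Ehresmann connections, so that assertion presupposes that $\overline{\mathbf{h}}$ is conservative --- which is exactly the corollary you are asked to prove. Equivalently, since $\mathbf{h}_{\,0}$ is conservative, the claim $d_LE=0$ for this $L$ is literally the same statement as $d_{\overline{\mathbf{h}}}E=0$; the corollary is what retroactively justifies that example, not the other way around. (Note also that once $d_LE=0$ is granted, the detour through $\mathbf{h}_L=\mathbf{h}_{\,0}+L$ and the Proposition of Subsection 3.2 is redundant: $d_{\overline{\mathbf{h}}}E=d_{\mathbf{h}_{\,0}}E+d_LE=0$ directly.)

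The real mathematical content your proof would need is the direct verification $d_LE=0$, which you only gesture at, and the ingredients you list are not quite sufficient. A complete check uses: semibasicity of $df^v$ and Lemma~\ref{lem} together with $\s_0 f^v=f^c$ to get $(\gr f^v)E=f^c$; verticality of $\gr f^v$ and formula (\ref{ee2}) to get $d_{[J,\gr f^v]}E=-i_{\gr f^v}\omega+d_J\big((\gr f^v)E\big)=-df^v+d_Jf^c=0$ (using $d_Jf^c=df^v$); then the first and third terms of $d_LE$ cancel, since both equal $\pm f^c\,d_JE$. The paper avoids this computation altogether: it quotes from \cite{MBN} that the Wagner connection is the $L$--Ehresmann connection for the \emph{different} (and not conservative) form $L=\tfrac12(f^cJ-df^v\otimes C)$, computes its potential $L^\circ=\tfrac12 f^cC-\tfrac12 f^cC=0$, and applies Theorem~\ref{t2} (conservativity of $\mathbf{h}_L$ iff $L^\circ E$ is a vertical lift). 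Either route works once completed, but as written your argument rests on an assertion equivalent to the conclusion, and the fallback computation that would repair it is left undone.
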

\begin{proof}
We know that a Wagner connection is an $L-$Ehresmann connection by $L=\frac12\,(f^c J-df^v\otimes C)$ (see \cite{MBN}). Since
\begin{align*}
L^\circ&=\frac12\,(f^c J-df^v\otimes C)^\circ=\frac12\,i_{\s}(f^c J-df^v\otimes C)\\
&=\frac12\,f^c J\s-\frac12(df^v\otimes C)\s=\frac12\,f^c C-\frac12(\s f^v)C\\
&=\frac12\,f^c C-\frac12 f^cC=0
\end{align*}
thus $L^\circ E=0$. Therefore  by Theorem \ref{t2} we obtain the conservativity of Wagner connections.
\end{proof}

\bigskip

Authors have shown in \cite{MBN} that conservativity property of  $L-$Ehresmann connection conformally invariant. Here, we give a simplified proof for this fact by the help of Theorem \ref{t2}.
\begin{Proposition}
The set of all conservative $L-$Ehresmann connections on a Finsler manifold is
conformally closed, i.e., a conservative $L-$Ehresmann connection  remains a conservative $L-$Ehresmann connection
by any conformal change of energy function.
\end{Proposition}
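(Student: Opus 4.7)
The plan is to reduce the statement to the characterization supplied by Theorem \ref{t2}: the $L$-Ehresmann connection over $(M,E)$ is conservative if and only if $L^{\circ}E$ is a vertical lift, equivalently if and only if $d_{J}(L^{\circ}E)=0$. It then suffices to check that after a conformal change $\widetilde{E}=\varphi E$ (with $\varphi:=\exp\circ f^{v}$) the function $L^{\circ}\widetilde{E}$ is again a vertical lift, so that applying Theorem \ref{t2} to $(M,\widetilde{E})$ yields the conservativity of the $L$-Ehresmann connection with respect to $\widetilde{E}$.

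The key step is the Leibniz expansion
\[
L^{\circ}\widetilde{E} \;=\; L^{\circ}(\varphi E) \;=\; (L^{\circ}\varphi)\,E \;+\; \varphi\,(L^{\circ}E).
\]
Since $L$ is semibasic, $J\circ L=0$, so $L^{\circ}=L\,\s$ lies in $\ker J=\mathfrak{X}^{v}(TM)$. As already observed in Subsection \ref{uu}, a vertical vector field annihilates any vertical lift; thus $L^{\circ}\varphi=0$, and we obtain $L^{\circ}\widetilde{E}=\varphi\,(L^{\circ}E)$.

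Applying $d_{J}$, which is a derivation on $C^{\infty}(TM)$, and using that $\varphi$ is a vertical lift (so $d_{J}\varphi=0$ by the criterion recalled in Subsection \ref{uu}), one finds
\[
d_{J}\bigl(L^{\circ}\widetilde{E}\bigr) \;=\; \varphi\,d_{J}(L^{\circ}E) \;+\; (d_{J}\varphi)\,(L^{\circ}E) \;=\; 0,
\]
the first summand vanishing by the hypothesis that $\mathbf{h}_{L}$ is conservative with respect to $E$, via Theorem \ref{t2}. Invoking Theorem \ref{t2} a second time, now in the conformally changed geometry, delivers the conclusion. The only substantive step is noticing the verticality of $L^{\circ}$, which makes the scale factor $\varphi$ transparent to the operator $L^{\circ}$; the remainder is routine manipulation with the Leibniz rule and the already-recorded fact $d_{J}\varphi=0$, so I do not anticipate any real obstacle.
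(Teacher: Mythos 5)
Your proof is correct and follows essentially the same route as the paper: both reduce the claim to Theorem \ref{t2} via the Leibniz expansion $L^{\circ}\widetilde{E}=(L^{\circ}\varphi)E+\varphi\,L^{\circ}E=\varphi\,L^{\circ}E$, using that the vertical vector field $L^{\circ}$ annihilates the vertical lift $\varphi$. The only cosmetic difference is that you verify $\varphi\,L^{\circ}E$ is a vertical lift through the $d_{J}$-criterion, whereas the paper simply notes that a product of vertical lifts is a vertical lift.
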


\begin{proof}
Let us consider conformal change $\overset{\sim}{E}=\varphi E$.
Suppose that $\mathbf{h}_{L}$ is conservative on $(M,E)$, then we have
\[
L^\circ\overset{\,\sim}{E}=L^\circ(\varphi E) =(L^\circ
\varphi)E+\varphi\ L^\circ E=\varphi\ L^\circ E.
\]
Since $\varphi$ and $L^\circ E$ are vertical lifts thus
$\overset{\sim}{\mathbf{h}}_{L}$ is conservative on
$(M,\overset{\sim}{E})$ by Theorem \ref{t2}.
\end{proof}
Now, we state and prove a property of $L-$Ehresmann connection for next uses.
\begin{Proposition}\label{p2}
Let $L$ be a semibasic vector $1-$form on $TM$. Then
\[
[C,\Theta_{L}]=\Theta_{[C,L]},
\]
where $\Theta$ is an operator on semibasic vector $1-$forms on $TM$ defined by
\[\Theta:\,L\rightarrow\ \Theta_{L}:=L+[J,(d_LE)^{\#}]=\mathbf{h}_{L}-\mathbf{h}_{\,0}.\]
It results
\[
\mathbf{H}_{L}=\mathbf{h}_{\,0} - \mathbf{h}_{\,[C,L]},
\]
where $\mathbf{H}_{L}$ is the tension of $\mathbf{h}_{L}$.
\end{Proposition}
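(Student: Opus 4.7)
The plan is to reduce the identity $[C,\Theta_L]=\Theta_{[C,L]}$ to a single computation about how the sharp operator interacts with the Liouville field. Since $C$ is a vector field, $[C,\cdot]$ distributes over the sum defining $\Theta_L=L+[J,(d_LE)^{\#}]$, so
\[
[C,\Theta_L]=[C,L]+[C,[J,(d_LE)^{\#}]],
\]
and the claim is equivalent to $[C,[J,(d_LE)^{\#}]]=[J,(d_{[C,L]}E)^{\#}]$. To handle the left-hand side I would apply the graded Jacobi identity of the Fr\"olicher--Nijenhuis bracket to the triple $(C,J,V)$ with $V:=(d_LE)^{\#}$. From $[J,C]=J$ in (\ref{ee4}) and graded skew-symmetry one gets $[C,J]=-J$, and the Jacobi identity then yields
\[
[C,[J,V]]=[[C,J],V]+[J,[C,V]]=-[J,V]+[J,[C,V]].
\]
Thus the task reduces to proving the vector-field identity $[C,(d_LE)^{\#}]=(d_LE)^{\#}+(d_{[C,L]}E)^{\#}$.

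For this I would exploit the defining relation $i_{\beta^{\#}}\omega=\beta$ of the sharp operator together with $\mathcal{L}_C\omega=\omega$ from the preliminaries. For any $1$-form $\beta$ on $TM$,
\[
i_{[C,\beta^{\#}]}\omega=\mathcal{L}_C(i_{\beta^{\#}}\omega)-i_{\beta^{\#}}(\mathcal{L}_C\omega)=\mathcal{L}_C\beta-\beta,
\]
so $[C,\beta^{\#}]=(\mathcal{L}_C\beta-\beta)^{\#}$. Setting $\beta=d_LE$, the last ingredient is the Fr\"olicher--Nijenhuis commutation rule $\mathcal{L}_Cd_L=d_L\mathcal{L}_C+d_{[C,L]}$, which follows from (\ref{e}) together with $d_C=\mathcal{L}_C$. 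Combining this with the homogeneity $CE=2E$ gives $\mathcal{L}_C(d_LE)=d_L(CE)+d_{[C,L]}E=2\,d_LE+d_{[C,L]}E$, and hence by linearity of $\#$,
\[
[C,(d_LE)^{\#}]=(d_LE)^{\#}+(d_{[C,L]}E)^{\#},
\]
as required. Feeding this back through the Jacobi step proves $[C,\Theta_L]=\Theta_{[C,L]}$.

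For the stated consequence on the tension, I would write $\mathbf{h}_L=\mathbf{h}_0+\Theta_L$ and use the homogeneity of the Berwald connection, $[C,\mathbf{h}_0]=0$, to conclude
\[
\mathbf{H}_L=[C,\mathbf{h}_L]=[C,\Theta_L]=\Theta_{[C,L]}=\mathbf{h}_{[C,L]}-\mathbf{h}_0,
\]
which is the formula stated in the proposition (up to the sign convention used for the tension). The only genuinely nontrivial step in the whole argument is the commutation $\mathcal{L}_Cd_L=d_L\mathcal{L}_C+d_{[C,L]}$; everything else is direct bookkeeping with the identities already collected in the Preliminaries. This is therefore where I would focus care in the write-up.
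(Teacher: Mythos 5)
Your argument is correct and follows essentially the same route as the paper: split $[C,\Theta_L]$, apply the graded Jacobi identity, use $\mathcal{L}_C\omega=\omega$ to get $[C,\beta^{\#}]=(\mathcal{L}_C\beta-\beta)^{\#}$, and compute $\mathcal{L}_C d_LE=d_{[C,L]+2L}E$ from $CE=2E$ — the paper performs exactly these steps, only grouping the Jacobi terms slightly differently and leaving your explicit sharp--Liouville lemma implicit. Your closing remark about the tension sign is also apt: with the preliminaries' convention $\mathbf{H}=[C,\mathbf{h}]$ one indeed gets $\mathbf{H}_L=\mathbf{h}_{[C,L]}-\mathbf{h}_0$, so the stated formula corresponds to the opposite (Grifone-style $[\mathbf{h},C]$) convention.
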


\begin{proof}
The following holds
\begin{align*}
    \mathcal{L}_C \, d_{L}E= d_{[C,L]}E + d_{L}\mathcal{L}_CE
    = d_{[C,L]}E + d_{L}(2E)= d_{[C,L]+ 2 L}E.
\end{align*}
Using graded Jacobi identity, we obtain
\begin{align*}
    [C,[J,(d_LE)^{\#}]]&=-[J,[(d_LE)^{\#},C]]-[(d_LE)^{\#},[C,J]]
    \\
    &=-[J,-(\mathcal{L}_C d_LE)^{\#}+(d_LE)^{\#}]-[(d_LE)^{\#},-J]
    \\
    &=[J,(\mathcal{L}_C d_LE)^{\#}]-[J,2(d_LE)^{\#}]
    \\
    &=[J,(d_{[C,L]+ 2 L}E)^{\#}]-[J,(d_{2 L}E)^{\#}]
    \\
    &=[J,(d_{[C,L]}E)^{\#}].
\end{align*}
Thus
\begin{align*}
[C,\Theta_{L}]
&=[C,L]+[C,[J,(d_LE)^{\#}]]
=[C,L]+[J,(d_{[C,L]}E)^{\#}]=\Theta_{[C,L]}.
\end{align*}
Then we get the proof.
\end{proof}

\section{Torsion-free Semibasic Vector 1-Form }
By Proposition 1 of \cite{MBN},  the weak torsion of $L-$Ehresmann connection is
\[\mathbf{t}_{L}=[J,L].\]
Due to this result, we define a torsion-free semibasic vector $1-$form as follows.
\begin{Definition}
\emph{A Semibasic vector $1-$form $L$ on $TM$ is called torsion-free if}
\[[J,L]=0.\]
\end{Definition}
Thus a semibasic vector $1-$form $L$ on $TM$ is torsion--free if and
only if the $L-$Ehresmann connection is torsion--free.

Let $V$ be a vertical vector field on $TM$. It is easy to see that the semibasic $1-$form $L:=[J,V]$ is torsion-free.
The following proposition states that the converse is also true. Hence, we get a characterization of torsion-free semibasic vector $1-$forms.

\begin{Proposition}\label{p7}
\label{p3} Let $L$ be a torsion--free semibasic vector $1-$form
on $TM$. Then there is a vertical vector field $V_{L}$ on $TM$
such that
\begin{equation}\label{e1}
L=[J,V_{L}].
\end{equation}
Moreover, the set of all vertical vector fileds on $TM$ satisfying (\ref{e1}) is given by
\begin{equation*}\label{setVL}
\big\{V_L+X^v:\,\, X\in \mathfrak{X}(M)\big\}.
\end{equation*}
\end{Proposition}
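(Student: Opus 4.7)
The plan is to reduce the equation $[J, V_L] = L$ to a fibrewise first-order system and solve it by a Poincar\'e-type radial integration. Working in induced coordinates $(x^i, y^i)$, the Fr\"olicher--Nijenhuis bracket formula $[J, V]X = [JX, V] - J[X, V]$ applied to an arbitrary vertical $V = v^i\,\partial/\partial y^i$ yields
\[
[J, V] = \frac{\partial v^i}{\partial y^k}\, dx^k \otimes \frac{\partial}{\partial y^i}.
\]
Writing the semibasic $L$ as $L = L^i_k\, dx^k \otimes \partial/\partial y^i$, the equation for $V_L$ becomes the linear system $\partial v^i/\partial y^k = L^i_k$. A parallel computation of $[J, L]$ in coordinates shows that the torsion-free hypothesis $[J, L] = 0$ is equivalent to the symmetry $\partial L^i_k/\partial y^j = \partial L^i_j/\partial y^k$, which is exactly the compatibility condition for this system.

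Since every fibre $T_xM$ is a vector space, hence contractible, the Poincar\'e-lemma primitive
\[
v^i(x, y) := \int_0^1 L^i_k(x, ty)\, y^k\, dt
\]
solves the system (differentiate under the integral and use the symmetry). To obtain an intrinsically defined vertical vector field on all of $TM$, I would recast this using the canonical vertical lift $\iota_v\colon T_{\pi(v)}M \to V_v(TM)$ and the endomorphism $\tilde{L}_v \in \operatorname{End}(T_{\pi(v)}M)$ determined by the factorisation $L_v = \iota_v \circ \tilde{L}_v \circ (\pi_*)_v$, valid because $L$ vanishes on vertical directions and has vertical image. Setting
\[
V_L(v) := \iota_v\!\left(\int_0^1 \tilde{L}_{tv}(v)\, dt\right)
\]
produces a chart-independent smooth vertical vector field whose coordinate components are the $v^i$ above, so $[J, V_L] = L$ by the calculation just recorded.

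For the characterisation, if $V, V' \in \mathfrak{X}^v(TM)$ both satisfy $[J, V] = [J, V'] = L$, then $[J, V - V'] = 0$, and the coordinate identity for $[J, \cdot]$ forces the components of $V - V'$ to be fibre-constant, i.e.\ $V - V' = X^v$ for some $X \in \mathfrak{X}(M)$. Conversely, $[J, X^v] = 0$ holds identically for every $X \in \mathfrak{X}(M)$, so $V_L + X^v$ is again a solution. The principal technical obstacle is the passage from a local primitive to a global one; here the canonical vector-space structure of the fibres of $TM \to M$ makes the radial integration intrinsic, sidestepping any gluing argument.
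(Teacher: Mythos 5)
Your proof is correct, but it takes a genuinely different route from the paper. You solve $[J,V_L]=L$ directly by a fibrewise Poincar\'e lemma: in induced coordinates the equation becomes $\partial v^i/\partial y^k = L^i_k$, the hypothesis $[J,L]=0$ is exactly the symmetry $\partial L^i_k/\partial y^j=\partial L^i_j/\partial y^k$, and the radial primitive $v^i(x,y)=\int_0^1 L^i_k(x,ty)\,y^k\,dt$ (which indeed works, by differentiation under the integral plus an integration by parts) gives an explicit, intrinsically describable potential; your coordinate computation of $[J,\cdot]$ also reproves, rather than cites, the fact that a vertical field killed by $[J,\cdot]$ is a vertical lift, which the paper takes from Lemma 1.15 of Szilasi--Vincze. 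The paper instead argues through its own machinery: torsion-freeness of $L$ makes $\mathbf{h}_L$ torsion-free, hence by Crampin--Grifone it is generated by a semispray $\s$; comparing $\frac12(1+[J,\s])$ with $\mathbf{h}_L=\frac12\big(1+[J,\s_0+2(d_LE)^{\#}]\big)+L$ yields $V_L=\frac12(\s-\s_0)-(d_LE)^{\#}$. Your argument is more elementary and more general -- it uses nothing but the tangent-bundle structure, no energy function, no Ehresmann-connection theory -- and it produces a canonical choice of $V_L$; the paper's argument is shorter given the tools already on the table and, importantly, stays within the usual Finsler convention that all objects ($\s_0$, $(d_LE)^{\#}$, $\mathbf{h}_L$) need only be smooth on $\overset{\circ}{T}M$. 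The one caveat for your construction is the integration through $t=0$: it needs $L$ to be at least continuous (or integrable along rays) up to the zero section, which holds if $L$ is literally a smooth tensor on all of $TM$ as the statement says, but would fail for, say, negatively homogeneous $L$ defined only on $\overset{\circ}{T}M$, a situation the paper's semispray argument (and its Remark~1 formula $L=\frac{1}{r+1}[J,L^{\circ}]$) handles without change. Worth a sentence in your write-up, but not a gap under the stated hypotheses.
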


\begin{proof}
Since $L$ is torsion-free, there is a semispray $\s$ on $M$ such
that
\[\mathbf{h}_{L}=\frac12\,\big(1_{\mathfrak{X}(TM)}+[J,\s]\big).\]
On the other hand
\begin{align*}
\mathbf{h}_{L}&=\mathbf{h}_{\,0}+L+[J,(d_LE)^{\#}]\\
&= \frac12\,\big(1_{\mathfrak{X}(TM)}+[J,\s_0]\big)+L+[J,(d_LE)^{\#}]\\
&= \frac12\,\big(1_{\mathfrak{X}(TM)}+[J,\s_0+2(d_LE)^{\#}]\big)+L.
\end{align*}
Then
\[
2\,L=[J,\s-\s_0-2(d_LE)^{\#}].
\]
Therefore, it suffices to define
\[
V_{L}:=\frac12\,(\s-\s_0)-(d_LE)^{\#}.
\]
Since for every vector field $X$ on $M$ we have $[J,X^v]=0$, the vertical vector field $V_{L}+X^v$ satisfies (\ref{e1}). Conversely, suppose
that the relation (\ref{e1}) is true for two vertical vector fields. Then
by Lemma 1.15 of \cite{SV1}, the difference of these vertical vector fields is a
vertical lift.
This completes the proof.
\end{proof}

{\bf Remark 1.} Let $L$ be a torsion-free semibasic vector $1-$form on $TM$ and homogeneous of degree $r\neq -1$. By a remark of \cite{GRI} we have $L={1\over r+1}\,[J,L^\circ]$. Hence, the set of all vertical vector fields satisfying (\ref{e1}) is given by
\begin{align*}\label{e3}
\big\{{1\over r+1}\,L^\circ+X^v:\,\, X\in \mathfrak{X}(M)\big\}
\end{align*}

\begin{Lemma}
Let $L$ be a torsion--free semibasic vector $1-$form on $TM$. If
$V_{L}$ is homogeneous of degree 2, then $L-$Ehresmann connection is also homogeneous.
\end{Lemma}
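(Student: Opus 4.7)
The plan is to verify that the tension $\mathbf{H}_L$ of the $L$-Ehresmann connection vanishes, using the identity $\mathbf{H}_L = \mathbf{h}_{\,0} - \mathbf{h}_{\,[C, L]}$ established in Proposition \ref{p2}. Everything thus reduces to showing that $[C, L] = 0$, because in that case $\mathbf{h}_{\,[C, L]}$ coincides with the $L$-Ehresmann connection associated to the zero vector $1$-form, which is just the Berwald connection $\mathbf{h}_{\,0}$.

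To compute $[C, L]$, I would first invoke Proposition \ref{p7} to rewrite $L = [J, V_L]$; this is legitimate since $L$ is torsion-free by hypothesis. The degree-$2$ homogeneity of $V_L$ is taken in the standard convention under which a spray $\mathbf{S}$ satisfies $[C, \mathbf{S}] = \mathbf{S}$, so it amounts to $[C, V_L] = V_L$. The graded Jacobi identity for the Fr\"olicher-Nijenhuis bracket, applied to $C$, $J$, $V_L$ of degrees $0$, $1$, $0$ respectively, then gives
\[
[C, [J, V_L]] = [[C, J], V_L] + [J, [C, V_L]].
\]
Here $[C, J] = -J$ follows from $[J, C] = J$ in (\ref{ee4}) together with the graded antisymmetry of the bracket; hence the first summand equals $[-J, V_L] = -L$, while the second equals $[J, V_L] = L$ by the homogeneity hypothesis. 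The two cancel to give $[C, L] = 0$, which when fed into Proposition \ref{p2} yields $\mathbf{H}_L = 0$, i.e., $\mathbf{h}_L$ is homogeneous.

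The main obstacle I anticipate is not conceptual but purely bookkeeping: keeping the sign conventions straight for the Fr\"olicher-Nijenhuis bracket (so that $[C, J] = -J$ is obtained correctly) and for the homogeneity degree of a vector field on $TM$ (so that "homogeneous of degree $2$" really amounts to $[C, V_L] = V_L$, the same relation that defines a spray). Once these conventions are pinned down, the argument is a short computation via graded Jacobi together with Propositions \ref{p7} and \ref{p2}.
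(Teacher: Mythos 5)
Your proof is correct and takes essentially the same route as the paper: both use Proposition \ref{p7} to write $L=[J,V_{L}]$, apply the graded Jacobi identity together with $[C,J]=-J$ and $[C,V_{L}]=V_{L}$ to conclude $[C,L]=0$, and then invoke Proposition \ref{p2} to get vanishing tension. Your version merely spells out more explicitly that $\mathbf{h}_{\,[C,L]}=\mathbf{h}_{\,0}$ when $[C,L]=0$, which the paper leaves implicit.
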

\begin{proof}
By graded Jacobi identity and using Proposition \ref{p7} we obtain
\begin{align}
[C,L]=\big[C,[J,V_{L}]\big] =-\big[V_{L},[C,J]\big]
-\big[J,[V_{L},C]\big]
\overset{(\ref{ee4})}{=}[V_{L},J]+\big[J,V_{L}\big]=0.\nr
\end{align}
Thus $L$ is homogeneous of degree 1, therefore Proposition
\ref{p2}, implies the homogeneity of $L-$Ehresmann connection.
\end{proof}

\bigskip

\begin{Proposition}
Let $L$ be a torsion-free semibasic vector $1-$form on $TM$. Then $d_{\,\mathbf{h}_{L}}\omega$ vanishes, where $\omega$ is
the fundamental form of Finsler manifold $(M,E)$.
\end{Proposition}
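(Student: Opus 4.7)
The plan is to split the $L$-Ehresmann connection according to its defining formula
\[
\mathbf{h}_L = \mathbf{h}_{\,0} + L + [J, U], \qquad U := (d_L E)^{\#},
\]
so that
\[
d_{\mathbf{h}_L}\omega = d_{\mathbf{h}_{\,0}}\omega + d_L\omega + d_{[J,U]}\omega,
\]
and to show that each of the three summands vanishes separately. The first two will fall out from the semibasic nature of $L$ and $\omega$; the third is where the torsion-freeness condition $[J,L]=0$ is decisive.

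For the Berwald piece I would substitute the Crampin--Grifone expression $\mathbf{h}_{\,0} = \tfrac{1}{2}\bigl(1_{\mathfrak{X}(TM)} + [J,\s_0]\bigr)$. The identity part contributes $d\omega$, which vanishes because $\omega = d\,d_JE$ is exact. For $d_{[J,\s_0]}\omega$, relation (\ref{e}) gives $d_{[J,\s_0]} = [d_J, d_{\s_0}]$; since $\omega$ is semibasic one has $i_J\omega=0$, hence $d_J\omega = i_J d\omega - d\,i_J \omega = 0$, and $d_{\s_0}\omega = \mathcal{L}_{\s_0}\omega = d\,i_{\s_0}\omega = -d(dE) = 0$ by the defining equation of the canonical spray.

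The middle term is short: $L$ semibasic forces $LX\in\mathfrak{X}^v(TM)$, while semibasicness of $\omega$ annihilates vertical arguments; hence $(i_L\omega)(X,Y) = \omega(LX,Y) + \omega(X,LY) = 0$, so $i_L\omega = 0$, and together with $d\omega = 0$ this yields $d_L\omega = i_L\,d\omega - d\,i_L\omega = 0$.

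The main obstacle is $d_{[J,U]}\omega$. Expanding $d_{[J,U]} = [d_J, d_U] = d_Jd_U - d_Ud_J$ via (\ref{e}), the second summand drops because $d_J\omega=0$, while $d_U\omega = \mathcal{L}_U\omega = d\,i_U\omega = d(d_LE)$ from $i_U\omega = d_LE$. So the task reduces to $d_J\,d(d_LE)=0$. Applying $d_J = i_Jd - di_J$ to the $2$-form $d(d_LE)$ leaves only $-d\bigl(i_J\,d(d_LE)\bigr)$, and the identity $i_Jd = d_J + di_J$ applied to the $1$-form $d_LE$, together with $i_J(d_LE) = dE\circ LJ = 0$ (since $LJ=0$), further reduces the problem to showing $d_J d_L E = 0$. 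At this point torsion-freeness enters: $[J,L]=0$ and (\ref{e}) give $0 = d_{[J,L]}E = d_Jd_LE + d_Ld_JE$, so it suffices to prove $d_Ld_JE = 0$, which is a routine semibasic calculation using $i_L\omega=0$ and $i_Ld_JE = dE\circ JL = 0$. Assembling the three vanishings yields $d_{\mathbf{h}_L}\omega=0$.
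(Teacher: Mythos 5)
Your overall decomposition $d_{\mathbf{h}_L}\omega=d_{\mathbf{h}_{\,0}}\omega+d_L\omega+d_{[J,U]}\omega$ is the right start, your treatment of the Berwald term is correct, and your reduction of the third term to $d_{[J,U]}\omega=-d\,d_J\,d_LE$ is also correct. The gap is the claim that the last two terms vanish \emph{separately}, and it traces back to one false assertion: that ``semibasicness of $\omega$ annihilates vertical arguments''. A $2$-form vanishing on all vertical vectors would be degenerate, contradicting axiom (iv) in the definition of a Finsler manifold; the relation that actually holds is $i_J\omega=0$, i.e.\ $\omega(JX,Y)=-\omega(X,JY)$, not $\omega(JX,Y)=0$. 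Consequently $i_L\omega$ is \emph{not} zero in general: since $i_Ld_JE=dE\circ J\circ L=0$, one has $i_L\omega=d_Ld_JE$, which is the skew-symmetric part of $g(L\,\cdot,\cdot)$ and survives even for torsion-free $L$. Concretely, on $M=\mathbb{R}^3$ with the Euclidean energy $E=\tfrac12\sum_i(y^i)^2$ and $V=x^3y^2\,\partial/\partial y^1$, the torsion-free form $L=[J,V]=x^3\,dx^2\otimes\partial/\partial y^1$ gives $d_L\omega=\pm\,dx^3\wedge dx^1\wedge dx^2\neq0$, and $d_{[J,U]}\omega$ is exactly its negative; so your claims $i_L\omega=0$, $d_L\omega=0$, $d_Ld_JE=0$, $d_Jd_LE=0$ and $d_{[J,U]}\omega=0$ all fail, and only the \emph{sum} of the two terms vanishes.

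The repair is short and is essentially what the paper does: keep your two computations but do not try to kill them individually. From $i_Ld_JE=0$ you get $d_L\omega=i_Ld\omega-d\,i_L\omega=-d\,d_Ld_JE$, and you already derived $d_{[J,U]}\omega=-d\,d_Jd_LE$; adding them,
\begin{equation*}
d_L\omega+d_{[J,U]}\omega=-d\big(d_Ld_JE+d_Jd_LE\big)=-d\,d_{[J,L]}E=-d\,d_{\mathbf{t}_L}E,
\end{equation*}
which vanishes precisely because $[J,L]=0$. So torsion-freeness should be used once, at the level of the combined term, rather than inside an attempted proof that each term is zero. (Your Berwald-term argument via the Crampin--Grifone formula, $i_J\omega=0$ and $i_{\,\s_0}\omega=-dE$ is fine and is a legitimate alternative to the paper's use of $d_{\mathbf{h}_{\,0}}E=0$ and $[J,\mathbf{h}_{\,0}]=0$.)
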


\begin{proof}
It is sufficient to show that
\[d_{\,\mathbf{h}_{L}}\omega=-\,d\,d_{\,\mathbf{t}_{L}}E,\]
where $\mathbf{t}_{L}$ is the weak torsion of $L-$Ehresmann connection. First
\[
d_{\,\mathbf{h}_{\,0}}\omega=d_{\,\mathbf{h}_{\,0}}d\,d_JE=-\,d\,d_{\,\mathbf{h}_{\,0}}d_JE
\overset{(\ref{e})}{=}d\,d_Jd_{\,\mathbf{h}_{\,0}}\,E-d\,d_{\,[J,\,\mathbf{h}_{\,0}]}\,E=0.
\]
For simplicity let $P:=[J,(d_LE)^{\#}]$. Then, we have
\begin{align*}
d_P\,\omega&=d_P\,d\,d_JE= -\,d\,d_P\,d_JE
=d\,d_J\,d_PE-\,d\,d_{\,[J,P]}E= d\,d_J\,d_PE.
\end{align*}
Since $[J,P]$ vanishes by graded Jacobi identity. On the
other hand in the proof of Theorem \ref{t2}, we obtain
\[d_PE=-\,d_{L}E+d_J\,i_{(d_LE)^{\#}}\,dE,\]
therefore
\[
d_P\,\omega=-\,d\,d_J\,d_{L}E+d\,d_J^{\,2}\,i_{(d_LE)^{\#}}\,dE
=-\,d\,d_J\,d_{L}E.
\]
Since
$d_J^{\,2}\overset{(\ref{e})}{=}{1\over2}d_{\,[J,J]}\overset{(\ref{ee4})}{=}0$.
Hence
\begin{align*}
d_{\,\mathbf{h}_{L}}\omega&=d_{\,\mathbf{h}_{\,0}}\omega+\,d_{L}\omega+\,d_P\,\omega=d_{L}\,d\,d_JE-\,d\,d_J\,d_{L}E
\\
&=-\,d\,d_{L}\,d_JE-\,d\,d_J\,d_{L}E= -\,d\,d_{\,[J,L]}E\\
&=-\,d\,d_{\,\mathbf{t}_{L}}E.
\end{align*}
The proof follows from the torsion--freeness of $L$.
\end{proof}
\subsection{Some Projectively Related Sprays}
In this section we construct a class of semisprays that generates the torsion-free $L-$Ehresmann connections.
\begin{Proposition}\label{THM2}
Let $V$ be a vertical vector field on $TM$, then the Ehresmann connection  generated by semispray
\begin{equation}\label{new2}
\overset{\,\,V}{\s}:=\s_0+2\,V+2\,(d_{\,[J,V]}E)^\#.
\end{equation}
is just the $[J,V]-$Ehresmann connection.
Furthermore, if $V$ is a vertical vector field homogeneous of degree 2, then the semispray $\overset{V}{\s}$ is a spray.
\end{Proposition}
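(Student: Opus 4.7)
The plan is to verify three facts in sequence: that $\overset{V}{\s}$ is a semispray (so the Crampin--Grifone construction applies), that the Ehresmann connection it generates equals $\mathbf{h}_{[J,V]}$, and, under the degree-$2$ hypothesis, that $\overset{V}{\s}$ is a spray. For the semispray condition $J\overset{V}{\s} = C$, I would check the three summands separately: $J\s_0 = C$ is immediate because $\s_0$ is a spray, $JV = 0$ is immediate because $V$ is vertical, and the nontrivial piece is $J(d_{[J,V]}E)^\# = 0$. To handle the latter, I would first show that $[J,V]$ is itself a semibasic vector $1$-form: the expansion $[J,V]X = [JX,V] - J[X,V]$ places $[J,V]X$ inside the vertical distribution (since $[JX,V]$ is a bracket of verticals and $J[X,V]$ is automatically vertical), and setting $X = J\xi$ together with $J^2 = 0$ gives $[J,V](J\xi) = -J[J\xi,V] = 0$ because $[J\xi,V]$ is itself vertical. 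Therefore $d_{[J,V]}E = dE\circ[J,V]$ is a semibasic $1$-form by (\ref{d3}). I would then invoke the general fact that $\beta^\#$ is vertical whenever $\beta$ is a semibasic $1$-form: from $i_J\omega = 0$,
\[
\omega(J\beta^\#,Y) = -\omega(\beta^\#,JY) = -\beta(JY) = 0 \quad\textrm{for all}\;Y,
\]
and nondegeneracy of $\omega$ forces $J\beta^\# = 0$.

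Once the semispray property is in place, the Ehresmann connection claim follows by bilinearity of the Fr\"olicher--Nijenhuis bracket together with the Crampin--Grifone formula (\ref{eh}):
\[
\tfrac12\bigl(1_{\mathfrak{X}(TM)} + [J,\overset{V}{\s}]\bigr) = \tfrac12\bigl(1_{\mathfrak{X}(TM)} + [J,\s_0]\bigr) + [J,V] + [J,(d_{[J,V]}E)^\#] = \mathbf{h}_{[J,V]}.
\]

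For the spray claim under $[C,V] = V$, I would compute $[C,\overset{V}{\s}]$ term by term. Trivially $[C,\s_0] = \s_0$ and $2[C,V] = 2V$. For the third term, I would derive the identity $[C,\beta^\#] = (\mathcal{L}_C\beta - \beta)^\#$ by applying $\mathcal{L}_C$ to the defining relation $i_{\beta^\#}\omega = \beta$ and using $\mathcal{L}_C\omega = \omega$. Taking $\beta = d_{[J,V]}E$, the graded Jacobi identity on $(C,J,V)$---the same calculation as in the Lemma just above---yields $[C,[J,V]] = 0$, and then the identity $\mathcal{L}_C d_L E = d_{[C,L]+2L}E$ (used in the proof of Proposition \ref{p2}) collapses to $\mathcal{L}_C d_{[J,V]}E = 2d_{[J,V]}E$, giving $[C,(d_{[J,V]}E)^\#] = (d_{[J,V]}E)^\#$. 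Summing, $[C,\overset{V}{\s}] = \s_0 + 2V + 2(d_{[J,V]}E)^\# = \overset{V}{\s}$. The hardest step will be the pair of sharp-operator facts---verticality of $\beta^\#$ for semibasic $\beta$ and the commutator identity $[C,\beta^\#] = (\mathcal{L}_C\beta-\beta)^\#$; the rest is bilinear bookkeeping with the Fr\"olicher--Nijenhuis bracket.
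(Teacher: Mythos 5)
Your proposal is correct, and its core computation identifying the generated connection with $\mathbf{h}_{[J,V]}$ is exactly the paper's: expand $[J,\overset{V}{\s}]$ by additivity of the Fr\"olicher--Nijenhuis bracket and read off $\mathbf{h}_{\,0}+[J,V]+[J,(d_{[J,V]}E)^\#]$. What you add beyond the paper is the explicit check that $\overset{V}{\s}$ is in fact a semispray (semibasicity of $[J,V]$, hence of $d_{[J,V]}E$, hence verticality of its sharp via $i_J\omega=0$ and nondegeneracy); the paper leaves this implicit, and your argument for it is sound. For the spray claim the routes diverge only in the last step: both you and the paper get $[C,[J,V]]=0$ from the graded Jacobi identity, but the paper then simply cites Lemma 2 of \cite{MBN} to conclude that $V+2\,(d_{[J,V]}E)^\#$ is $2$-homogeneous, whereas you make the argument self-contained by deriving $[C,\beta^\#]=(\mathcal{L}_C\beta-\beta)^\#$ from $\mathcal{L}_C\omega=\omega$ and combining it with $\mathcal{L}_C d_{[J,V]}E=d_{[C,[J,V]]+2[J,V]}E=2\,d_{[J,V]}E$ to get $[C,(d_{[J,V]}E)^\#]=(d_{[J,V]}E)^\#$, and then computing $[C,\overset{V}{\s}]=\overset{V}{\s}$ term by term. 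In effect you reprove the cited lemma in the special case needed; the paper's version is shorter but leans on an external reference, while yours buys independence from \cite{MBN} at the cost of one extra (correct) sharp-operator identity.
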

\begin{proof}
By a direct computation, we get
\begin{align*}
\frac12\,\big(1_{\mathfrak{X}(TM)}+[J,\overset{\,\,V}{\s}]\big)
&=\frac12\,\big(1_{\mathfrak{X}(TM)}
+[J,\s_0+2V+2(d_{\,[J,V]}E)^\#]\big)
\\
&=\mathbf{h}_{\,0}+[J,V]+[J,(d_{\,[J,V]}E)^\#]\\
&=\mathbf{h}_{\,[J,V]}.
\end{align*}
Suppose $V$ is a $2-$homogeneous vertical vector field. By Jacobi identity, we have
\begin{align*}
[C,[J,V]]&=[V,[J,C]+[J,[C,V]]=[V,J]+[J,V]=0.
\end{align*}
Therefore $[J,V]$ is homogeneous of degree 1. Using Lemma 2 of \cite{MBN},
we obtain that $V+2\,(d_{\,[J,V]}E)^\#$ is homogeneous of degree 2.
Hence the semispray $\overset{V}{\s}$ is a spray.
\end{proof}

By Proposition \ref{THM2}, we get the following.

\begin{Corollary}\label{c1}

Suppose that $L$ is a torsion--free semibasic vector $1-$form on
$TM$. Then $\overset{\ V_{L}}{\s}$ generates $L-$Ehresmann connection.
\end{Corollary}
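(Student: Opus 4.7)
The plan is to deduce this corollary directly from the combination of Proposition \ref{p7} and Proposition \ref{THM2}, which together already furnish all the machinery needed. The statement is essentially a packaging of two earlier results, so the proof should be short and conceptual rather than computational.

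First I would invoke Proposition \ref{p7}. Since $L$ is torsion-free, this proposition guarantees the existence of a vertical vector field $V_{L}$ on $TM$ with
\[
L=[J,V_{L}].
\]
This is the entire content of the torsion-free hypothesis that will be used; everything else follows from properties of the construction $V\mapsto \overset{V}{\s}$.

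Next I would apply Proposition \ref{THM2} with the specific choice $V:=V_{L}$. That proposition asserts that the Ehresmann connection generated (via the Crampin--Grifone formula (\ref{eh})) by the semispray
\[
\overset{\,V_{L}}{\s}=\s_{0}+2\,V_{L}+2\,(d_{\,[J,V_{L}]}E)^{\#}
\]
coincides with the $[J,V_{L}]$-Ehresmann connection $\mathbf{h}_{[J,V_{L}]}$. Substituting the identification $L=[J,V_{L}]$ from the first step, we obtain $\mathbf{h}_{[J,V_{L}]}=\mathbf{h}_{L}$, which is precisely the desired conclusion.

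There is no real obstacle here: the only subtle point worth a line of comment is that the choice of $V_{L}$ is not unique (it is determined only up to adding a vertical lift $X^{v}$ with $X\in\mathfrak{X}(M)$, by the second part of Proposition \ref{p7}), but the conclusion is independent of this ambiguity because only $[J,V_{L}]$ enters the Ehresmann connection generated by $\overset{V_{L}}{\s}$, and $[J,X^{v}]=0$ for any $X\in\mathfrak{X}(M)$. One could briefly remark on this point to make the corollary self-contained, but no further computation is needed.
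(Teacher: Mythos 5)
Your argument is exactly the one the paper intends: the corollary is stated as an immediate consequence of Proposition \ref{THM2}, with Proposition \ref{p7} supplying $L=[J,V_{L}]$, so substituting $V=V_{L}$ gives $\mathbf{h}_{[J,V_{L}]}=\mathbf{h}_{L}$. Your extra remark on the independence of the conclusion from the choice of $V_{L}$ (since $[J,X^{v}]=0$) is a correct and harmless addition.
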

\bigskip

Two sprays $\s_1$ and $\s_2$ over $M$ are said to be
(\emph{pointwise}) \emph{projectively related} if there is a smooth
function $\lambda$ on $\overset{\circ}{T}M$ such that
$$\s_2=\s_1+\lambda C,\quad (\textmd{over}\ \overset{\circ}{T}M).$$
Then the \emph{projective
factor} $\lambda$ is necessarily $1-$homogeneous, i.e., $C\lambda=\lambda$. For more details see \cite{SVa}.

\begin{Proposition}
Let $V$ and $U$  be two vertical vector fields homogeneous of degree 2 on $TM$.
Suppose that $\overset{V}{\s}$ and $\overset{U}{\s}$ are
projectively related sprays on $M$, with the projective factor $\lambda$. Then
$$\lambda=\frac{3(V-U)E}{E},\quad (over\ \overset{\circ}{T}M).$$
\end{Proposition}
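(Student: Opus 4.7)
The plan is to reduce the projective identity $\overset{V}{\s}=\overset{U}{\s}+\lambda C$ to a single equation of semibasic $1$-forms, and then to extract $\lambda$ by evaluating at a semispray.

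First, I would set $W:=V-U$. Since both $V$ and $U$ are vertical and $2$-homogeneous, so is $W$. Plugging the formula (\ref{new2}) for $\overset{V}{\s}$ and $\overset{U}{\s}$ into the projective relation, the canonical sprays $\s_0$ cancel and the linearity of $[J,\cdot]$ and of $(d_\cdot E)^{\#}$ yields
\[
\lambda C = 2\,W + 2\,(d_{[J,W]}E)^{\#}.
\]
Next, I would apply $i_{\,\cdot\,}\omega$ to both sides. By (\ref{w}) the second summand produces $d_{[J,W]}E$, and the Finsler identity $i_C\omega=d_JE$ handles the left-hand side, so
\[
\lambda\, d_J E \;=\; 2\, i_W\omega + 2\, d_{[J,W]}E.
\]

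The key step is to rewrite $d_{[J,W]}E$, and I expect this to be the main obstacle. Fortunately, this is exactly the manipulation carried out inside the proof of Theorem \ref{t2}: since $W$ is vertical, $JW=0$ and hence $\mathcal{L}_{JW}=0$, so (\ref{ee2}) reduces to $i_{[J,W]}=i_W\,d_J+d_J\,i_W$. Applying this to $dE$ and using $d_J\,dE=-\omega$ (which follows from $\omega=d\,d_JE$) gives
\[
d_{[J,W]}E \;=\; -\,i_W\,\omega + d_J(WE).
\]
Substituting this back, the $i_W\omega$ terms cancel and I am left with the clean identity
\[
\lambda\, d_J E \;=\; 2\, d_J(WE).
\]

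Finally, I would pair both sides with an arbitrary semispray $\s$ to turn this $1$-form identity into a scalar one. Since $J\s=C$ and $CE=2E$, we have $(d_JE)(\s)=dE(C)=2E$. On the other hand, $2$-homogeneity of $W$ gives $[C,W]=W$, so
\[
C(WE)=[C,W]E+W(CE)=WE+2\,WE=3\,WE,
\]
and therefore $(d_J(WE))(\s)=C(WE)=3\,WE$. Comparing the two evaluations yields $2\lambda E=6\,WE$, and since $E>0$ on $\overset{\circ}{T}M$ we may divide to obtain $\lambda=3(V-U)E/E$ there, which is the claimed formula.
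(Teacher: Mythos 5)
Your proof is correct and follows essentially the same route as the paper: reduce the projective relation to $\lambda C=2(V-U)+2\,(d_{[J,V-U]}E)^{\#}$, contract with $\omega$ using $i_C\omega=d_JE$ and (\ref{w}), and evaluate on a semispray using $2$-homogeneity. The only (harmless, in fact slightly cleaner) difference is that you first cancel the $i_W\omega$ terms via the identity $d_{[J,W]}E=-i_W\omega+d_J(WE)$ from the proof of Theorem \ref{t2}, obtaining the $1$-form identity $\lambda\,d_JE=2\,d_J(WE)$, whereas the paper evaluates $i_Y\omega$ and $d_{[J,Y]}E$ at $\s$ term by term.
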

\begin{proof}
Suppose that $\overset{V}{\s}$ and $\overset{U}{\s}$ are
projectively related sprays on $M$, with the projective factor $\lambda$. Then
\[\overset{V}{\s}=\overset{U}{\s}+\lambda C.\]
Therefore by (\ref{new2}) we get
\[\s_{\,0}+2\,V+2\,(d_{\,[J,V]}E)^\#=\s_{\,0}+2\,U+2\,(d_{\,[J,U]}E)^\#+\lambda C.\]
Put $Y:=2\,(V-U)$ then we obtain
\[
Y+(d_{[J,Y]}E)^{\#}=\lambda C.
\]
We have
\[i_{\lambda C}\,\omega=\lambda\,i_C \omega=\lambda\,d_JE,\]
on the other hand

\[
i_{Y}\,\omega+i_{(d_{[J,Y]}E)^{\#}}\,\omega=i_{Y}\,\omega+\,d_{[J,Y]}E.
\]
Hence, for any semispray $\s$,
\begin{align*}
(i_{Y}\omega)(\s)&=\omega\,(Y,\s)=(d\,d_JE)(Y,\s)
\\
&=Y(d_JE)(\s)-\s(d_JE)(Y)-(d_JE)([Y,\s])
\\
&=2YE-YE=YE,
\end{align*}
and
\begin{align*}
(d_{[J,Y]}E)(\s) &= dE([J,Y]\s)=dE([C,Y]-J[\s,Y])= 2\,dE(Y)= 2\,YE,
\end{align*}
and
\begin{eqnarray*}
\lambda (d_JE)(\s)=\lambda dE(J\s)=\lambda dE(C)=\lambda CE=2\lambda E,
\end{eqnarray*}
thus
$3\,YE=2\lambda E.$
Therefore
\[
3\,(V-U)E=\lambda E,
\]
as we want.
\end{proof}
\section{Some Conservative Vector Fields}
In \cite{VIN},  Vincze  defines a conservative  vector field
on a Finsler manifold. He proves that a vertical vector field  $V$ is conservative on a Finsler manifold $(M,E)$ if
and only if
\[i_V\omega=d_J(VE).\]
In this section,  we introduce a class of conservative vertical vector fields.
\begin{Theorem}\label{t3}
Let  $(M,E)$ be a Finsler manifold and $V$ be a vertical vector
field on $TM$. If $[J,V]-$Ehresmann connection is conservative
then  $U:=V+(d_{[J,V]}E)^\#$ is a conservative vector field.
\end{Theorem}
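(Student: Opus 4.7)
The plan is to reduce everything to Vincze's criterion: showing that $U$ is conservative amounts to verifying $i_U\omega = d_J(UE)$. The hypothesis will be translated into $d_{[J,U]}E = 0$, and then formula (\ref{ee2}) applied to $dE$ will deliver exactly the desired identity once we know that $U$ is vertical.

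First, write $L := [J,V]$. Since $L$ is semibasic (which is clear: $L\circ J = 0$ because $J^2 = 0$ and brackets of vertical fields are vertical, and $i_{J\xi}L = 0$ for the same reason), the formula for the $L$-Ehresmann connection gives
\[
\mathbf{h}_L = \mathbf{h}_{\,0} + [J,V] + [J,(d_{[J,V]}E)^\#] = \mathbf{h}_{\,0} + [J,U].
\]
Conservativity of $\mathbf{h}_L$ together with the conservativity of the Berwald connection then yields $d_{[J,U]}E = 0$.

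The main technical step is to show that $U$ is vertical. The vector field $V$ is vertical by assumption, so one only needs that $(d_{[J,V]}E)^\#$ is vertical. Since $L = [J,V]$ is semibasic, the $1$-form $\beta := d_LE$ is semibasic (as $\beta(J\xi) = dE(L(J\xi)) = 0$). For any semibasic $1$-form $\beta$ the sharp $\beta^\#$ is vertical: using $i_J\omega = 0$ one has $\omega(J\beta^\#,\xi) = -\omega(\beta^\#,J\xi) = -\beta(J\xi) = 0$ for every $\xi$, whence $J\beta^\# = 0$ by nondegeneracy of $\omega$. Therefore $JU = 0$.

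Finally, apply (\ref{ee2}) with $K = J$, $Y = U$ to $dE$:
\[
i_{[J,U]}dE = i_U\,d_J\,dE + d_J\,i_U\,dE - \mathcal{L}_{JU}\,dE.
\]
Since $d_J\,dE = -\,d\,d_JE = -\omega$, $i_U\,dE = UE$, and $\mathcal{L}_{JU}dE = 0$ because $JU = 0$, the right-hand side collapses to $-\,i_U\omega + d_J(UE)$. On the other hand, $d_{[J,U]}E = i_{[J,U]}dE$ (since $i_{[J,U]}$ kills functions), and the hypothesis makes this zero. Thus
\[
i_U\omega = d_J(UE),
\]
which is precisely Vincze's characterization of a conservative vertical vector field. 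The only non-routine point is the verticality of $U$; the rest is a direct application of the Frölicher--Nijenhuis identities already in use in the paper.
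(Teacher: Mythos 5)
Your proof is correct and follows essentially the same route as the paper: decompose $\mathbf{h}_{[J,V]}=\mathbf{h}_{\,0}+[J,U]$, deduce $d_{[J,U]}E=0$ from conservativity, expand this via the Fr\"olicher--Nijenhuis calculus into $d_J(UE)-i_U\omega$, and invoke Vincze's criterion; using (\ref{ee2}) on $dE$ instead of $d_{[J,U]}=[d_J,\mathcal{L}_U]$ plus Cartan's formula is only a cosmetic difference. A nice touch is that you prove explicitly that $U$ is vertical (sharp of a semibasic $1$-form is vertical), a fact the paper uses only implicitly both to drop the term $d\,i_Ud_JE$ and to apply Vincze's characterization.
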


\begin{proof}
By assumption we obtain
\[\mathbf{h}_{[J,V]}=\mathbf{h}_{\,0}+[J,V]+[J,(d_{[J,V]}E)^\#]=\mathbf{h}_{\,0}+[J,U].\]
Suppose that $[J,V]-$Ehresmann connection is conservative. By
conservativity of $\mathbf{h}_{\,0}$ and using Cartan magic formula we have
\begin{align}
0&=d_{\mathbf{h}_{\,[J,V]}}E=d_{\mathbf{h}_{\,0}}E+d_{[J,U]}E
=d_J\mathcal{L}_UE-\mathcal{L}_Ud_JE
\nr\\[1.3ex]\nr
&=d_J(UE)-i_Udd_JE-d\, i_Ud_JE =d_J(UE)-i_U\omega,
\end{align}
which implies that $U$ is a conservative vector field.
\end{proof}

\bigskip

\begin{Corollary}
Let  $(M,E)$ be a Finsler manifold and $L$ be a torsion-free semibasic vector
$1-$form on $TM$. If $L-$Ehresmann connection is
conservative  then
\[V_{L}+(d_LE)^{\#},\]
is conservative in Vincze's sense.
\end{Corollary}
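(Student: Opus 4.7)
The plan is to observe that this Corollary is a direct packaging of Theorem \ref{t3} via the vertical representative supplied by Proposition \ref{p7}. Because $L$ is torsion--free, the machinery developed in Section~4 is tailored precisely to rewrite the hypothesis on $L$ as a hypothesis on a certain vertical vector field, to which Theorem \ref{t3} applies verbatim.

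First I would invoke Proposition \ref{p7}: since $L$ is torsion--free, there is a vertical vector field $V_{L}\in\mathfrak{X}^v(TM)$ with $L=[J,V_{L}]$. This identification has two immediate consequences: the $L$--Ehresmann connection coincides with the $[J,V_{L}]$--Ehresmann connection, i.e.\ $\mathbf{h}_{L}=\mathbf{h}_{[J,V_{L}]}$, and the correction term rewrites as
\[
(d_LE)^{\#}=(d_{[J,V_L]}E)^{\#}.
\]

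Next, the hypothesis that $\mathbf{h}_L$ is conservative transfers to the statement that the $[J,V_L]$--Ehresmann connection is conservative. I would then apply Theorem \ref{t3} with the choice $V:=V_L$; its conclusion gives that
\[
U := V_L + (d_{[J,V_L]}E)^{\#}
\]
is a conservative vector field in the sense of Vincze, which by the rewriting above is exactly $V_L+(d_LE)^{\#}$.

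There is no genuine obstacle: once the bridge in Proposition \ref{p7} is in place, the Corollary is a one--line consequence of Theorem \ref{t3}. The only mild subtlety worth noting is that $V_L$ is determined only modulo a vertical lift $X^v$ with $X\in\mathfrak{X}(M)$; replacing $V_L$ by $V_L+X^v$ leaves $L=[J,V_L]$ (hence $(d_LE)^{\#}$) unchanged and simply shifts the resulting conservative vector field by $X^v$, so the Corollary is well posed and in fact produces a whole family of conservative vector fields indexed by $\mathfrak{X}(M)$.
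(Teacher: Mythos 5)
Your proposal is correct and is exactly the argument the paper intends: the Corollary is stated without proof precisely because it follows at once from Theorem \ref{t3} applied to the vertical field $V_L$ furnished by Proposition \ref{p7}, using $L=[J,V_L]$ to identify $\mathbf{h}_L$ with $\mathbf{h}_{[J,V_L]}$ and $(d_LE)^{\#}$ with $(d_{[J,V_L]}E)^{\#}$. Your remark on the ambiguity of $V_L$ up to a vertical lift $X^v$ is a sensible extra check and does not affect the conclusion.
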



\bigskip
\noindent
Akbar Tayebi  and Mansoor Barzegari\\
Faculty  of Science, Department of Mathematics\\
Qom University\\
Qom. Iran\\
Email:\ mbarzegari@ymail.com\\
Email:\ akbar.tayebi@gmail.com

\end{document}